\newtheorem{remark}{Remark}
\newtheorem{theorem}{Theorem}[section]
\title{A stochastic homotopy tracking algorithm for parametric systems of nonlinear equations}
\author{
  Wenrui Hao\\
 Department of Mathematics\\
Pennsylvania State University\\
University Park, PA 16802, USA \\
  \texttt{wxh64@psu.edu} \\
   \And
Chunyue Zheng \\
 Department of Mathematics\\
Pennsylvania State University\\
University Park, PA 16802, USA \\
\texttt{cmz5199@psu.edu} \\
}
\begin{document}
\maketitle

\begin{abstract}
The homotopy continuation method has been widely used in solving parametric systems of nonlinear equations. But it can be very expensive and inefficient due to singularities during the tracking even though both start and end points are non-singular. The current tracking algorithms focus on the adaptivity of the stepsize by estimating the distance to the singularities but cannot avoid these singularities during the tracking. We present a stochastic homotopy tracking algorithm that perturbs the original parametric system randomly each step to avoid the singularities. We then prove that the stochastic solution path introduced by this new method is still closed to the original solution path theoretically. Moreover, several homotopy examples have been tested to show the efficiency of the stochastic homotopy tracking method.
\end{abstract}

% keywords can be removed
\keywords{stochastic homotopy tracking \and nonlinear parametric systems \and convergence analysis}

\section{Introduction}
The homotopy continuation method is the main tool to solve systems of polynomial equations in numerical algebraic geometry (NAG) \cite{bates2013numerically,leykin2011numerical,wampler2005numerical}.
The basic idea is to trace out a one-real-dimensional solution curve described implicitly by a system of equations:
given a nonlinear system ${\bm F} ({\bm u})$ to solve, one first forms a nonlinear system ${\bm G} ({\bm u})$ that is related to ${\bm F} ({\bm u})$ in a prescribed way but has known, or
easily computable  solutions. The systems ${\bm G} ({\bm u})$ and ${\bm F} ({\bm u})$ are combined to form
a homotopy, such as the linear homotopy
\begin{equation} {\bm H} ({\bm u},t) = {\bm F} ({\bm u}) (1-t) + t {\bm G} ({\bm u})=0, \label{homotopy1}
\end{equation}
where  ${\bm G} ({\bm u})$ is a start system with known solutions and ${\bm F} ({\bm u})$ is the target system we want to solve. Then solutions of ${\bm F} ({\bm u})=0$ can be solved by tracking $t$ from $1$ to $0$ via this linear homotopy.  In NAG, there is a well-developed theory on how to choose the start system ${\bm G} ({\bm u})$ to guarantee all the solutions of ${\bm F} ({\bm u})$ via this homotopy.
Furthermore, by constructing different start systems based on other theories, the homotopy continuation method has also successfully applied to compute solutions of nonlinear systems such as nonlinear PDEs \cite{HAOWENO,wang2018two,yang2018convergence}, machine learning \cite{CHao,hao2018equation}, and nonlinear systems in biology and physics \cite{hao2020spatial}. Moreover, the homotopy continuation method has been also used to explore the general parameter space, so-called
paramotopy,  as a quite powerful tool for many classes of problems that arise in practice \cite{bates2018paramotopy}.

In the linear homotopy setup, each solution path can be tracked via the prediction/correction algorithm \cite{bates2013numerically,leykin2011numerical,wampler2005numerical} which is referred as the homotopy tracking algorithm. This algorithm could become very inefficient if the parametric system is singular or near singular. To avoid the singular system,
in NAG \cite{wampler2005numerical},  the gamma trick is proposed to construct a random homotopy setup in (\ref{homotopy1})  by multiplying a random complex number. Then the probability of hitting a singularity during the tracking is zero. Nevertheless, the system could be still near singular so that the homotopy tracking is still time-consuming \cite{bates2013numerically,leykin2011numerical,wampler2005numerical}. In order to address this numerical challenge, an adaptive multi-precision path tracking algorithm \cite{bates2008adaptive} has been developed by adjusting precision in response to step failure according to the error estimates. An adaptive step-size homotopy tracking method \cite{HZ} has also been developed to control the tracking stepsize each time to compute the bifurcation point. An endgame algorithm \cite{bates2011parallel} has also been widely used to deal with the singularities at $t=0$. However, all these algorithms could be very slow and inefficient when the size of nonlinear systems becomes large \cite{bates2006bertini}.

Stochastic algorithms have been widely used in scientific computing \cite{cauwenberghs1993fast,nguyen2015iq}, e.g., the coordinate gradient descent has been developed for solving large-scale optimization problems \cite{nesterov2012efficiency} and has also been revised for solving the leading eigenvalue problem \cite{li2019coordinatewise}. Motivated by these stochastic algorithms, in this paper, we present an efficient stochastic homotopy tracking method that gives the original system a random perturbation each step so that it can avoid singularities and improve the efficiency during the tracking.  The paper is organized as follows: In section 2, we present a novel stochastic  homotopy tracking algorithm; In section 3, we analyze the stochastic homotopy tracking algorithm and show the solution path is close to the original solution path under certain conditions;
several numerical examples are presented in section 4 to illustrate the efficiency of the stochastic homotopy tracking method.

\section{Stochastic homotopy continuation method}
\label{sec:main}
Generally speaking, a nonlinear parametric system is written as $\mathbf{F}:
\mathbb{R}^n\times\mathbb{R}\rightarrow\mathbb{R}^n,$
\begin{equation}\label{Sys}
	\mathbf{F}(\mathbf{u},p)=\mathbf{0},
\end{equation}
where $p\in[a,b]$ is a parameter and $\mathbf{u}$ is the variable vector that depends on the parameter $p$, i.e., $\mathbf{u}=\mathbf{u}(p)$.
Suppose we have a solution at the starting point, namely $\mathbf{u}(a)=\mathbf{u}_0$, the homotopy tracking along the solution path, $\mathbf{u}(p)$, reduces down to solving the Davidenko differential equation \cite{bates2013numerically,wampler2005numerical},
\begin{equation}\label{Sys:trad}
	\left\{
	\begin{aligned}
		&	\mathbf{F}_\mathbf{u}(\mathbf{u},p)\frac{d \mathbf{u}}{dp}+\mathbf{F}_p(\mathbf{u},p)=\mathbf{0},\\
		& \mathbf{u}(a)=\mathbf{u}_0,
	\end{aligned}\right.
\end{equation}
where $\mathbf{F}_\mathbf{u}(\mathbf{u},p)$ is the Jacobian matrix and $\mathbf{F}_p(\mathbf{u},p)$ is the derivative vector with respect to $p$. If $\mathbf{F}_\mathbf{u}(\mathbf{u},p)$ is nonsingular, the solution path $\mathbf{u}(p)$ is smooth and unique. However, when $\mathbf{F}_\mathbf{u}(\mathbf{u},p)$ becomes singular, the solution path yields different types of bifurcations \cite{bates2013numerically}. Then the numerical homotopy tracking could become  very inefficient. In order to solve this numerical issue, a trial-and-error homotopy tracking method \cite{bates2013numerically,wampler2005numerical} and an adaptive homotopy tracking method \cite{HZ} have been developed to control the stepsize of $p$. However, the computational cost could still be very expensive when the homotopy tracking method is applied to the large-scale nonlinear systems due to the slow tracking near the singularity.

To address this challenge, we propose to solve a stochastic version of the Davidenko differential equation by introducing a noise term, namely
\begin{equation}\label{Sys:rand}
	\left\{
	\begin{aligned}
		&	\mathbf{F}_\mathbf{u}\big(\mathbf{u}(p,\omega),p\big)d\mathbf{u}(p,\omega)+\mathbf{F}_p(\mathbf{u}(p,\omega),p)dp=\mathbf{g}(\mathbf{u}(p,\omega),p)dW(p,\omega),\\
		& \mathbf{u}(a,\omega)=\mathbf{u}_0,
	\end{aligned}\right.
\end{equation}
where $\omega$ is a random variable and possesses the initial
condition $\mathbf{u}(a,\omega) = \mathbf{u}_0$ with probability one and
$dW(p,\omega)$ denotes differential form of the Brownian motion \cite{arnold1974stochastic}. Then, in this case, the solution path can avoid the singularity with probability one (See Fig. \ref{fig:illustration} for an illustration).

\begin{figure}[th]
	\centering
	\includegraphics[width=0.45\linewidth]{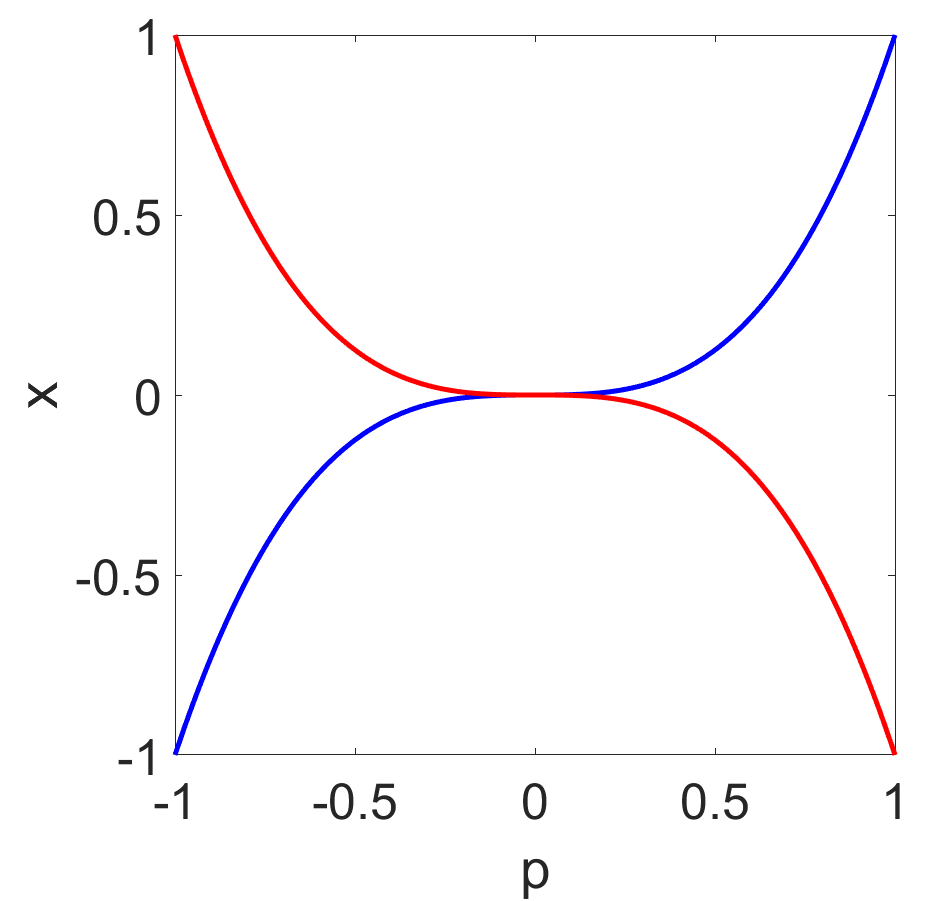}	\includegraphics[width=0.45\linewidth]{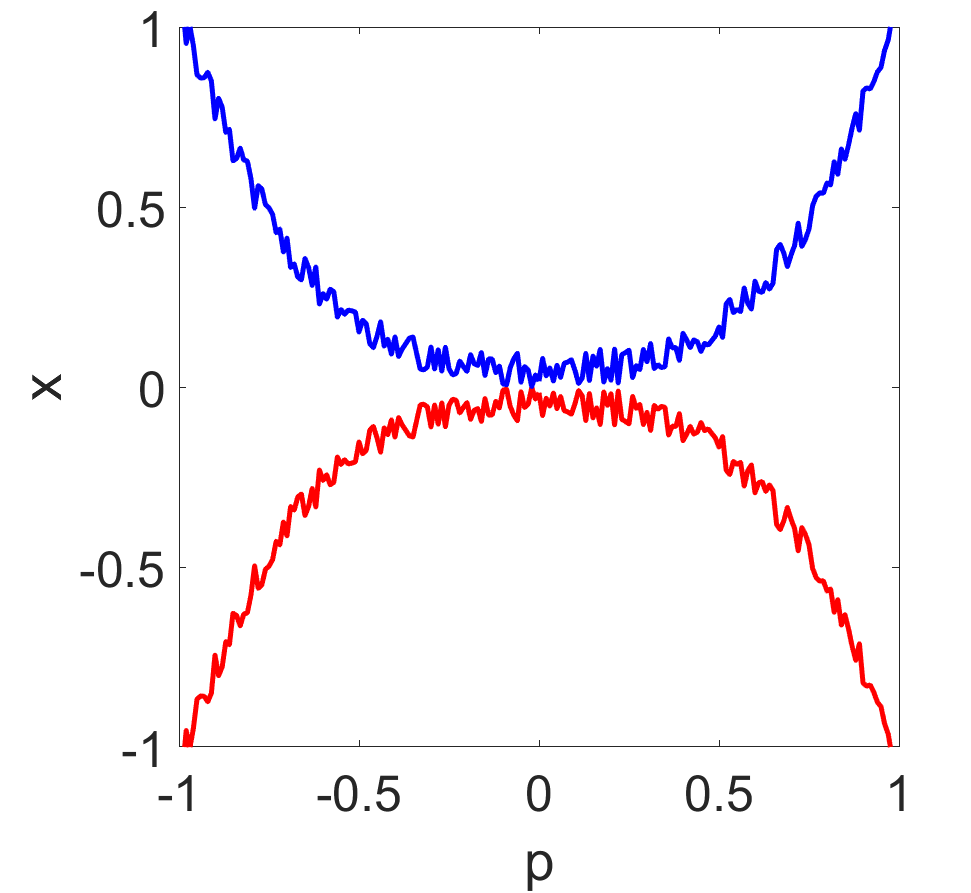}
	\caption{An illustration example, $x^2-p^6=0$, has two solution paths $x=\pm p^3$ and one bifurcation point at $p=0$. The traditional homotopy tracking ({\bf Left}) hits the bifurcation point  while the stochastic tracking ({\bf Right}) can avoid the bifurcation point by tracking $x=\pm (p^3+\xi)$, where $\xi\sim \mathcal{N}(0,0.1)$.}
	\label{fig:illustration}
\end{figure}

In order to integrate the idea of  the stochastic differential equation into the homotopy tracking, we   track the solution $\mathbf{u}(p)$	 from $p=a$ to $p=b$ with a stepsize $\Delta p$. Then for each $p_k=a+\Delta p \cdot k$, we solve the stochastic system below
	\begin{equation}\label{Sys:new} %	\tilde{\mathbf{F}}(\mathbf{u},p)
		\tilde{\mathbf{F}}(\mathbf{u},p_k) = \begin{bmatrix}
			F_1(\mathbf{u},p_k)\\\vdots\\F_{i-1}(\mathbf{u},p_k)\\\mathbf{u}(j) -\tilde{\mathbf{u}}_{k-1}(j)\\F_{i+1}(\mathbf{u},p_k) \\ \vdots \\F_{n}(\mathbf{u},p_k) \\
		\end{bmatrix}:= \mathbf{F}(\mathbf{u},p_k,\xi = (i,j))=\mathbf{0},
	\end{equation}
	where $\tilde{\mathbf{u}}_{k-1}$ is the solution from previous step and $\tilde{\mathbf{F}}:\mathbb{R}^{n}\times\mathbb{R}\rightarrow\mathbb{R}^{n}$ can be viewed as randomly chooses $n-1$ equations from $\mathbf{F}(\mathbf{u},p_k)$ and replaces $F_i$ by $\mathbf{u}(j) -\tilde{\mathbf{u}}_{k-1}(j)$.
Here the random variable $\xi$ follows the uniform distribution, namely $\mathbb{P}(\xi=(i,j)) = \frac{1}{n^2}$, and quantifies the perturbations to the original system $\mathbf{F}(\mathbf{u},p_k)$. More generally, we can randomly replace $m$ ($1\leq m\leq n$) equations
	of ${\mathbf{F}}(\mathbf{u},p)$ by $\mathbf{u}(\mathcal{J}) =\tilde{\mathbf{u}}_{k-1}(\mathcal{J})$ where $\mathcal{J}=(\mathcal{J}_1,\cdots,\mathcal{J}_m)$ is a $m$ index. Then we define the $s$-th equation of $\tilde{\mathbf{F}}$ as
	\begin{equation}
		\tilde{\mathbf{F}}_s(\mathbf{u},p_k)   = \left\{
		\begin{aligned}
			&\mathbf{F}_s(\mathbf{u},p_k),             \qquad\quad ~~~\qquad s \notin \mathcal{I}\\
			&\mathbf{u}(\mathcal{J}_c)-\tilde{\mathbf{u}}_{k-1}(\mathcal{J}_c),\qquad s\in \mathcal{I} \hbox{~and~} \mathcal{I}_c=s
		\end{aligned}
		\right.,
		\label{Sys:m_general}
	\end{equation}
	where $\mathcal{I}=(\mathcal{I}_1,\cdots,\mathcal{I}_m)$ stands for randomly choosing $m$ equations. If $s\in \mathcal{I}$, then we find $c$ such that $\mathcal{I}_c=s$ and replace the $s$-th equation by the previous value, namely,  $\mathbf{u}(\mathcal{J}_c)-\tilde{\mathbf{u}}_{k-1}(\mathcal{J}_c)$.
	Here $\mathcal{I}$ and $\mathcal{J}$ are randomly drawn from the uniform distribution, namely $\mathbb{P}(\mathcal{I},\mathcal{J}) = \frac{1}{(C_n^m)^2}$.  We denote the set of all possible $m$ indexes as $\mathcal{M}$.

Finally, we summarize the stochastic homotopy tracking algorithm in {\bf Algorithm \ref{alg1}}. In this algorithm, we increase the number of random equations, $m$, if there is no solution to the stochastic system $\tilde{\mathbf{F}}({\mathbf{u}},p_{k+1})=0$. This is equivalent to perform a larger perturbation to the original system by solving fewer equations. Similarly, we could also increase the perturbation by  setting an adaptive tolerance for $\|{\mathbf{F}}(\tilde{\mathbf{u}}_{k+1},p_{k+1})\|<TOL$ by fixing  the number of randomly choosing equation, $m$.

\begin{algorithm}[H]
	\SetAlgoLined
	\KwIn{A step-size $\Delta p$, a threshold $TOL$, and a start point $(\tilde{\mathbf{u}}_0,p_0)$}
	\KwOut{A nearby solution path $(\tilde{\mathbf{u}}_k,p_k)_{k=1}^N$}
	\For{$k=0,\cdots,N$}{
		Set m=1\;
		Randomly choose $n-m$ equations and $n-m$ variables to form the stochastic system $\tilde{\mathbf{F}}(\mathbf{u},p_{k+1})$ \eqref{Sys:new}\;
		Solve $\tilde{\mathbf{F}}(\mathbf{u},p_{k+1}) = \mathbf{0}$ using the predictor-corrector method\;
		\uIf{$\|\tilde{\mathbf{F}}(\tilde{\mathbf{u}}_{k+1},p_{k+1})\| < TOL$}{Update the solution sequence;}\Else{Increase $m$ and solve the stochastic system again.}
	}\caption{The pseudocode of the stochastic homotopy tracking algorithm.}\label{alg1}
\end{algorithm}

\section{Convergence Analysis}

We employ  the Euler predictor and the Newton corrector \cite{allgower2003introduction} for the homotopy tracking algorithm:
{Given a solution $(\mathbf{u}_0,p_0)$ on the path, that
	is, $\mathbf{F}(\mathbf{u}_0,p_0)=0$, an Euler predictor step gives
	\begin{equation}
		\mathbf{F}_\mathbf{u}(\mathbf{u}_0,p_0)\Delta \mathbf{u}=-\mathbf{F}_p(\mathbf{u}_0,p_0)\Delta
		p,\end{equation}
	and then letting ${\mathbf{u}}_1=\mathbf{u}_0+\Delta\mathbf{u}$; The Newton corrector reads \begin{equation}
		\mathbf{F}_\mathbf{u}({\mathbf{u}}_1,p_1)\Delta
		\mathbf{u}=-\mathbf{F}({\mathbf{u}}_1,p_1).\end{equation}
	Then we repeat this correction until $(\mathbf{u}_1,p_1)$ is on the path. The predictor-corrector method  for the stochastic homotopy tracking method needs to replace $\mathbf{F}$ by $\tilde{\mathbf{F}}$ defined in \eqref{Sys:new} with the corresponding derivatives below:
	\begin{equation}
		\begin{aligned}
			&\tilde{\mathbf{F}}_p(\mathbf{u})= \mathbf{F}_p(\mathbf{u},\xi = (i,j)) = \mathbf{F}_p(\mathbf{u}) - \frac{\partial F_i}{\partial p}\mathbf{e}_i,\\%\triangleq\mathbf{F}_p(\mathbf{u}) - C(\mathbf{u},\xi),\\
			&\tilde{\mathbf{F}}_\mathbf{u}(\mathbf{u})= \mathbf{F}_\mathbf{u}(\mathbf{u},\xi=(i,j)) = \mathbf{F}_\mathbf{u}(\mathbf{u})  - \mathbf{e}_i\frac{\partial F_i}{\partial\mathbf{u}}(\mathbf{u}) + E_{ij},% \triangleq \mathbf{F}_\mathbf{u}(\mathbf{u}) - S(\mathbf{u},\xi),
		\end{aligned}\nonumber
	\end{equation}
	where $E_{ij}$ is a matrix with all zero elements except the $(i,j)$-th element as one.
For the general stochastic system \eqref{Sys:m_general} with $m$ random equations, we have $\xi = (\mathcal{I},\mathcal{J})$ and
		\begin{equation}
			\begin{aligned}
				&\tilde{\mathbf{F}}_p(\mathbf{u})= \mathbf{F}_p(\mathbf{u}) - \sum_{i\in \mathcal{I}}\frac{\partial F_i}{\partial p}\mathbf{e}_i\triangleq\mathbf{F}_p(\mathbf{u}) - C(\mathbf{u},\xi),\\
				&\tilde{\mathbf{F}}_\mathbf{u}(\mathbf{u}) = \mathbf{F}_\mathbf{u}(\mathbf{u})  - \sum_{i\in \mathcal{I}}\mathbf{e}_i\frac{\partial F_i}{\partial\mathbf{u}}(\mathbf{u}) + \sum_{i\in \mathcal{I},j\in \mathcal{J}}E_{ij} \triangleq \mathbf{F}_\mathbf{u}(\mathbf{u}) - S(\mathbf{u},\xi).
			\end{aligned}\nonumber
		\end{equation}

	We also define the tensor $\nabla\mathbf{F}_\mathbf{u}(\mathbf{u})$ as follows:
	\begin{equation*}
		[\nabla\mathbf{F}_\mathbf{u}(\mathbf{u})]_{ijk} = [\nabla^2 \mathbf{F}_i(\mathbf{u})]_{jk},\quad i,j,k \in\{1,2,\cdots,n\}
	\end{equation*}
	and define the multiplication of the tensor with a vector, $\mathbf{b}\in\mathbb{R}^n$, as
	\begin{equation*}
		[\nabla\mathbf{F}_\mathbf{u}(\mathbf{u})\mathbf{b}]_{ij} = \sum_{k = 1}^{n}[\nabla^2 \mathbf{F}_i(\mathbf{u})]_{jk}\mathbf{b}_k.
	\end{equation*}
	Then $\|\nabla\mathbf{F}_\mathbf{u}(\mathbf{u}) \| = \max_{1\le i\le n} \|\nabla^2 \mathbf{F}_i(\mathbf{u})\|$. In this section, we analyze that the solution path guided by the stochastic homotopy tracking is closed to the path guided by the traditional homotopy tracking under certain conditions. This analysis is performed for Euler's prediction in Theorem 3.1 and for Newton's correction in Theorem 3.2.

	\begin{theorem}[Euler's Prediction]
		Suppose $\mathbf{u}_{0}$ and $\tilde{\mathbf{u}}_{0}$ are the start points for the original system $\mathbf{F}$ and the stochastic system $\tilde{\mathbf{F}}$ respectively.
		If we have the following assumptions
		\begin{itemize}
			\item  $\mathbf{F}_\mathbf{u}$ and $\tilde{\mathbf{F}}_\mathbf{u}$ are invertible and differentiable and \[\|\mathbf{F}_\mathbf{u}\|\leq L_\mathbf{u}, \|\mathbf{F}_\mathbf{u}^{-1}\|\leq M_\mathbf{u} \hbox{~and~}\|\tilde{\mathbf{F}}_\mathbf{u}^{-1}\|\leq M_\mathbf{u};\]
			\item $\nabla\mathbf{F}_\mathbf{u}$, $\nabla\tilde{\mathbf{F}}_\mathbf{u}$ are continuous;
			\item $\mathbf{F}_p$ and $\tilde{\mathbf{F}}_p$ are differentiable and $\|\mathbf{F}_p\|\leq M_p$;
			\item {$\nabla\mathbf{F}_p$ is continuous and $\|\nabla\mathbf{F}_p\|\leq L_p$,}
		\end{itemize}
		then we have
		\begin{equation}
			\begin{aligned}
				\|\mathbb{E}(\mathbf{u}_{N}-\tilde{\mathbf{u}}_{N})\|^2\le &{CS_1}\|\mathbb{E}(\mathbf{u}_{0}-\tilde{\mathbf{u}}_{0})\|^2+{CS_2}\frac{m^2}{n^2} + \mathcal{O}(\frac{m^2\Delta p}{n^2}),
			\end{aligned}
		\end{equation}
		where $CS_1$ and $CS_2$ are constants.	
		\label{thm:euler_m=1}
	\end{theorem}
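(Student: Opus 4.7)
The plan is to derive a deterministic one-step recursion for $\mathbb{E}[e_k]$, where $e_k := \mathbf{u}_k - \tilde{\mathbf{u}}_k$, and then close it with a discrete Gr\"onwall argument. Writing the Euler predictor for both systems and subtracting gives
\begin{equation*}
e_{k+1} = e_k + \Delta p\,\bigl[\tilde{\mathbf{F}}_\mathbf{u}^{-1}(\tilde{\mathbf{u}}_k)\tilde{\mathbf{F}}_p(\tilde{\mathbf{u}}_k) - \mathbf{F}_\mathbf{u}^{-1}(\mathbf{u}_k)\mathbf{F}_p(\mathbf{u}_k)\bigr].
\end{equation*}
I would add and subtract $\mathbf{F}_\mathbf{u}^{-1}(\tilde{\mathbf{u}}_k)\mathbf{F}_p(\tilde{\mathbf{u}}_k)$ and $\mathbf{F}_\mathbf{u}^{-1}(\tilde{\mathbf{u}}_k)\tilde{\mathbf{F}}_p(\tilde{\mathbf{u}}_k)$ to split the bracket into three telescoping pieces: (i) an inverse-Jacobian perturbation piece $\mathbf{F}_\mathbf{u}^{-1}(\tilde{\mathbf{u}}_k)\,S(\tilde{\mathbf{u}}_k,\xi_k)\,\tilde{\mathbf{F}}_\mathbf{u}^{-1}(\tilde{\mathbf{u}}_k)\,\tilde{\mathbf{F}}_p(\tilde{\mathbf{u}}_k)$, obtained from the identity $\tilde{\mathbf{F}}_\mathbf{u}^{-1}-\mathbf{F}_\mathbf{u}^{-1}=\mathbf{F}_\mathbf{u}^{-1}S\,\tilde{\mathbf{F}}_\mathbf{u}^{-1}$; (ii) an $\mathbf{F}_p$ perturbation piece $-\mathbf{F}_\mathbf{u}^{-1}(\tilde{\mathbf{u}}_k)\,C(\tilde{\mathbf{u}}_k,\xi_k)$; and (iii) a deterministic consistency piece $\mathbf{F}_\mathbf{u}^{-1}(\tilde{\mathbf{u}}_k)\mathbf{F}_p(\tilde{\mathbf{u}}_k) - \mathbf{F}_\mathbf{u}^{-1}(\mathbf{u}_k)\mathbf{F}_p(\mathbf{u}_k)$.

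Next I would compute expectations piecewise. Since the fresh draw $\xi_k$ is independent of the history generated by $\xi_0,\ldots,\xi_{k-1}$, conditioning on that history gives $\mathbb{E}[C(\mathbf{u},\xi)]=\tfrac{m}{n}\mathbf{F}_p(\mathbf{u})$ and $\mathbb{E}[S(\mathbf{u},\xi)]=\tfrac{m}{n}\mathbf{F}_\mathbf{u}(\mathbf{u})-\tfrac{m^2}{n^2}\mathbf{1}_{n\times n}$, since each row index is selected with probability $m/n$ and each pair $(i,j)$ with probability $m^2/n^2$; both means are $O(m/n)$ in norm. Combined with the uniform bounds $\|\mathbf{F}_\mathbf{u}^{-1}\|\le M_\mathbf{u}$, $\|\tilde{\mathbf{F}}_\mathbf{u}^{-1}\|\le M_\mathbf{u}$, $\|\mathbf{F}_p\|\le M_p$, pieces (i) and (ii) each contribute a drift of norm $C'(m/n)$. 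For piece (iii), the perturbation identity $\mathbf{F}_\mathbf{u}^{-1}(\mathbf{u})-\mathbf{F}_\mathbf{u}^{-1}(\mathbf{v})=\mathbf{F}_\mathbf{u}^{-1}(\mathbf{u})[\mathbf{F}_\mathbf{u}(\mathbf{v})-\mathbf{F}_\mathbf{u}(\mathbf{u})]\mathbf{F}_\mathbf{u}^{-1}(\mathbf{v})$ together with continuity of $\nabla\mathbf{F}_\mathbf{u}$ and the assumption $\|\nabla\mathbf{F}_p\|\le L_p$ yields a Lipschitz bound of the form $C''\|\mathbb{E}[e_k]\|$. Assembling the three pieces produces
\begin{equation*}
\|\mathbb{E}[e_{k+1}]\|\le (1+C_1\Delta p)\,\|\mathbb{E}[e_k]\| + C_2\,\tfrac{m}{n}\,\Delta p.
\end{equation*}
Squaring via Young's inequality $2ab\le \varepsilon a^2 + \varepsilon^{-1}b^2$ and iterating this geometric recursion from $k=0$ up to $k=N-1$ sums to $CS_1\|\mathbb{E}[e_0]\|^2 + CS_2(m/n)^2 + \mathcal{O}(m^2\Delta p/n^2)$, with $CS_1,CS_2$ of order $e^{2C_1(b-a)}$.

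The hard part is piece (i): the matrix $\tilde{\mathbf{F}}_\mathbf{u}^{-1}(\tilde{\mathbf{u}}_k)$ is itself a function of $\xi_k$, so the expectation does not factor as a product with $\mathbb{E}[S]$. The cleanest remedy is to substitute $\tilde{\mathbf{F}}_\mathbf{u}^{-1}=\mathbf{F}_\mathbf{u}^{-1}+\mathbf{F}_\mathbf{u}^{-1}S\,\tilde{\mathbf{F}}_\mathbf{u}^{-1}$ once more, isolating a leading term $\mathbb{E}[\mathbf{F}_\mathbf{u}^{-1}S]\mathbf{F}_\mathbf{u}^{-1}\mathbf{F}_p$ of order $m/n$ (handled as in Step 2) and a second-order remainder $\mathbf{F}_\mathbf{u}^{-1}S\mathbf{F}_\mathbf{u}^{-1}S\,\tilde{\mathbf{F}}_\mathbf{u}^{-1}\tilde{\mathbf{F}}_p$ of order $(m/n)^2$, which is exactly what the stated $\mathcal{O}(m^2\Delta p/n^2)$ remainder is meant to absorb. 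A related subtlety is that piece (iii) is nonlinear in $\tilde{\mathbf{u}}_k$, so its expectation involves $\mathbb{E}\|e_k\|$ rather than $\|\mathbb{E}[e_k]\|$; I would linearize around $\mathbf{u}_k$ using the tensor $\nabla\mathbf{F}_\mathbf{u}$ and use Jensen's inequality to pull the norm inside the expectation, so that the recursion genuinely closes on $\|\mathbb{E}[e_k]\|$ rather than on the larger quantity $\mathbb{E}\|e_k\|$.
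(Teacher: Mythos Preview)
Your overall architecture---subtract the two Euler steps, split the bracket into stochastic perturbation terms involving $C$ and $S$ plus a deterministic Lipschitz term, compute $\mathbb{E}[C]=\tfrac{m}{n}\mathbf{F}_p$ and $\mathbb{E}[S]=\tfrac{m}{n}\mathbf{F}_\mathbf{u}-\tfrac{m^2}{n^2}\mathbf{E}$ by the combinatorial identity $C_{n-1}^{m-1}/C_n^m=m/n$, and then iterate a discrete Gr\"onwall recursion on $\|\mathbb{E}[e_k]\|^2$---is exactly the paper's. The only cosmetic difference is the decomposition: the paper factors out $\mathbf{F}_\mathbf{u}^{-1}(\mathbf{u}_{k-1})$ on the left and then Taylor-expands $\mathbf{F}_\mathbf{u}(\mathbf{u}_{k-1})$ about $\tilde{\mathbf{u}}_{k-1}$, arriving at three terms $A_1,A_2,A_3$ that match your pieces (ii), (i), (iii) respectively. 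Your resolvent identity $\tilde{\mathbf{F}}_\mathbf{u}^{-1}-\mathbf{F}_\mathbf{u}^{-1}=\mathbf{F}_\mathbf{u}^{-1}S\tilde{\mathbf{F}}_\mathbf{u}^{-1}$ is an equivalent route to the same place.

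You are more honest than the paper about the ``hard part'' (i). The paper simply writes, for its $A_2$, an inequality of the form
\[
\Bigl\|\mathbb{E}\bigl[S\,\tilde{\mathbf{F}}_\mathbf{u}^{-1}(\mathbf{F}_p-C)\bigr]\Bigr\|^2\;\le\;\|\mathbb{E}[S]\|^2\,M_\mathbf{u}^2\,M_p^2,
\]
pulling the sup-norm bounds on the $\xi_k$-dependent factors $\tilde{\mathbf{F}}_\mathbf{u}^{-1}$ and $\mathbf{F}_p-C$ outside while retaining only the \emph{average} of $S$ inside; this factorization of the norm of an average of products is not justified, and your instinct to iterate the resolvent once more is a sounder way to isolate the leading $m/n$ contribution. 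Be aware, though, that your second-order remainder $\mathbf{F}_\mathbf{u}^{-1}S\mathbf{F}_\mathbf{u}^{-1}S\tilde{\mathbf{F}}_\mathbf{u}^{-1}\tilde{\mathbf{F}}_p$ is not automatically $O((m/n)^2)$: $\|S\|$ is $O(1)$ pathwise, so you would still need to compute $\mathbb{E}[S\otimes S]$ (or iterate once more) rather than bound it crudely.

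There is, however, a genuine gap in your handling of piece (iii). Jensen's inequality gives $\|\mathbb{E}[e_k]\|\le\mathbb{E}\|e_k\|$, not the reverse, so it cannot be used to ``close the recursion on $\|\mathbb{E}[e_k]\|$ rather than on the larger quantity $\mathbb{E}\|e_k\|$.'' After linearizing, your Lipschitz coefficient (the tensor $\nabla\mathbf{F}_\mathbf{u}$ or $\nabla\mathbf{F}_p$ evaluated at an intermediate point) is itself random through $\tilde{\mathbf{u}}_k$, so $\|\mathbb{E}[L(\omega)e_k(\omega)]\|$ is bounded by $\sup\|L\|\cdot\mathbb{E}\|e_k\|$, not by $\sup\|L\|\cdot\|\mathbb{E}[e_k]\|$. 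The paper makes exactly the same move in its bounds on $A_3$ and on $\|\mathbb{E}[\nabla\mathbf{F}_p(\mathbf{s}_{k-1})(\mathbf{u}_{k-1}-\tilde{\mathbf{u}}_{k-1})]\|$, treating the random intermediate-point coefficients as if deterministic; so your proposal is faithful to the paper, but the step is not rigorous in either place. A clean fix is to run the entire recursion on $\mathbb{E}\|e_k\|$ (or $\mathbb{E}\|e_k\|^2$) instead of on $\|\mathbb{E}[e_k]\|$; all the estimates go through and the final statement is only strengthened.
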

	
	\begin{proof}
		We compare the predictor step of  the traditional and the stochastic homotopy tracking at $p=p_{k-1}$ and obtain
		\begin{equation}
			\begin{aligned}
				\mathbf{u}_{k} &= \mathbf{u}_{k-1} +\mathbf{F}^{-1}_\mathbf{u}(\mathbf{u}_{k-1})\mathbf{F}_p(\mathbf{u}_{k-1})\Delta p,\\
				\tilde{\mathbf{u}}_{k}&=\tilde{\mathbf{u}}_{k-1} +\tilde{\mathbf{F}}^{-1}_\mathbf{u}(\tilde{\mathbf{u}}_{k-1})\tilde{\mathbf{F}}_p(\tilde{\mathbf{u}}_{k-1})\Delta p,\\
			\end{aligned}
		\end{equation}
		which implies
		\begin{equation}
			\mathbf{u}_{k}  - \tilde{\mathbf{u}}_{k} =  \mathbf{u}_{k-1} - \tilde{\mathbf{u}}_{k-1}  +\mathbf{FP}(\mathbf{u}_k,\tilde{\mathbf{u}}_k)\Delta p,
		\end{equation}
		where $\mathbf{FP}(\mathbf{u}_k,\tilde{\mathbf{u}}_k)=\mathbf{F}^{-1}_\mathbf{u}(\mathbf{u}_{k-1})\mathbf{F}_p(\mathbf{u}_{k-1})-\tilde{\mathbf{F}}^{-1}_\mathbf{u}(\tilde{\mathbf{u}}_{k-1})\tilde{\mathbf{F}}_p(\tilde{\mathbf{u}}_{k-1})$.
		Then by taking the expectation with respect to $\xi$, we have 	
		\begin{equation}
			\begin{aligned}
				&\ \|\mathbb{E}(\mathbf{u}_{k}-\tilde{\mathbf{u}}_{k})\|^2=\|\mathbb{E}( \mathbf{u}_{k-1} - \tilde{\mathbf{u}}_{k-1})  + \mathbb{E}\big(\mathbf{FP}(\mathbf{u}_k,\tilde{\mathbf{u}}_k)\big)\Delta p  \|^2\\
				\le& \|\mathbb{E}( \mathbf{u}_{k-1} - \tilde{\mathbf{u}}_{k-1})\|^2 + \| \mathbb{E}\big(\mathbf{FP}(\mathbf{u}_k,\tilde{\mathbf{u}}_k)\big)  \|^2\Delta p^2+2 \|\mathbb{E}( \mathbf{u}_{k-1} - \tilde{\mathbf{u}}_{k-1})\| \|\mathbb{E}\big(\mathbf{FP}(\mathbf{u}_k,\tilde{\mathbf{u}}_k)\big)  \|\Delta p\\
				\le& ( 1 + \Delta p)\|\mathbb{E}( \mathbf{u}_{k-1} - \tilde{\mathbf{u}}_{k-1})\|^2  +\|\mathbb{E}\big(\mathbf{FP}(\mathbf{u}_k,\tilde{\mathbf{u}}_k)\big) \|^2(\Delta p + \Delta p ^2).	\label{eqn:Euler}
			\end{aligned}
		\end{equation}
		
		Moreover, by Taylor's theorem, there exists $\mathbf{t}_{k-1}$ such that
		\begin{equation}
			\mathbf{F}_\mathbf{u}(\mathbf{u}_{k-1})=\mathbf{F}_\mathbf{u}(\tilde{\mathbf{u}}_{k-1})+\nabla\mathbf{F}_\mathbf{u}(\mathbf{t}_{k-1})\cdot(\mathbf{u}_{k-1}-\tilde{\mathbf{u}}_{k-1}).	
		\end{equation}
		Therefore, we have
		\begin{equation}
			\begin{aligned}
				&\mathbf{FP}(\mathbf{u}_k,\tilde{\mathbf{u}}_k)=\mathbf{F}^{-1}_\mathbf{u}(\mathbf{u}_{k-1})\big[\mathbf{F}_p(\mathbf{u}_{k-1}) - \mathbf{F}_\mathbf{u}(\mathbf{u}_{k-1})\mathbf{F}^{-1}_\mathbf{u}(\tilde{\mathbf{u}}_{k-1},\xi_{k})\mathbf{F}_p(\tilde{\mathbf{u}}_{k-1},\xi_{k})\big]\\
				=&\mathbf{F}^{-1}_\mathbf{u}(\mathbf{u}_{k-1})\Big[\mathbf{F}_p(\mathbf{u}_{k-1}) - \Big((\mathbf{F}_\mathbf{u}+S)(\tilde{\mathbf{u}}_{k-1},\xi_{k})+\nabla\mathbf{F}_\mathbf{u}(\mathbf{t}_{k-1})\cdot(\mathbf{u}_{k-1}-\tilde{\mathbf{u}}_{k-1})\Big)\\
				&\mathbf{F}^{-1}_\mathbf{u}(\tilde{\mathbf{u}}_{k-1},\xi_{k})\big(\mathbf{F}_p(\tilde{\mathbf{u}}_{k-1}) - C(\tilde{\mathbf{u}}_{k-1},\xi_{k})\big)\Big]\\
				=&\mathbf{F}^{-1}_\mathbf{u}(\mathbf{u}_{k-1})[\mathbf{F}_p(\mathbf{u}_{k-1}) -\mathbf{F}_p(\tilde{\mathbf{u}}_{k-1}) + R(\tilde{\mathbf{u}}_{k-1},\mathbf{u}_{k-1},\xi_{k})],
				%	=&\mathbf{F}^{-1}_\mathbf{u}(\mathbf{u}_{k-1})\nabla \mathbf{F}_p(\mathbf{s}_{k-1})(\mathbf{u}_{k-1}-\tilde{\mathbf{u}}_{k-1}) + \mathbf{F}^{-1}_\mathbf{u}(\mathbf{u}_{k-1}) R(\tilde{\mathbf{u}}_{k-1},\mathbf{u}_{k-1},\xi_{k})
			\end{aligned}\label{eq:FP1}
		\end{equation}
		where 	\begin{equation*}
			\begin{aligned}
				&R(\tilde{\mathbf{u}}_{k-1},\mathbf{u}_{k-1},\xi_{k}) \\
				=& C(\tilde{\mathbf{u}}_{k-1},\xi_{k}) - S(\tilde{\mathbf{u}}_{k-1},\xi_{k})\mathbf{F}^{-1}_\mathbf{u}(\tilde{\mathbf{u}}_{k-1},\xi_{k})(\mathbf{F}_p(\tilde{\mathbf{u}}_{k-1}) - C(\tilde{\mathbf{u}}_{k-1},\xi_{k}))\\	&-\nabla\mathbf{F}_\mathbf{u}(\mathbf{t}_{k-1})\cdot(\mathbf{u}_{k-1}-\tilde{\mathbf{u}}_{k-1})\mathbf{F}^{-1}_\mathbf{u}(\tilde{\mathbf{u}}_{k-1},\xi_{k})(\mathbf{F}_p(\tilde{\mathbf{u}}_{k-1}) - C(\tilde{\mathbf{u}}_{k-1},\xi_{k})).
			\end{aligned}
		\end{equation*}
		Moreover, there exists 	
		$\mathbf{s}_{k-1}$ such that \begin{equation*}\mathbf{F}_p(\mathbf{u}_{k-1})=\mathbf{F}^{-1}_\mathbf{u}(\mathbf{u}_{k-1}) + \nabla \mathbf{F}_p(\mathbf{s}_{k-1})(\mathbf{u}_{k-1}-\tilde{\mathbf{u}}_{k-1}),	\end{equation*}
		then Eq. (\ref{eq:FP1}) becomes
		\begin{equation}
			\begin{aligned}
				&\|\mathbb{E}( \mathbf{FP}(\mathbf{u}_{k-1},\tilde{\mathbf{u}}_{k-1}))  \|^2\\
				=&\|\mathbb{E}(  \mathbf{F}^{-1}_\mathbf{u}(\mathbf{u}_{k-1})\nabla \mathbf{F}_p(\mathbf{s}_{k-1})(\mathbf{u}_{k-1}-\tilde{\mathbf{u}}_{k-1})) +\mathbb{E}(   \mathbf{F}^{-1}_\mathbf{u}(\mathbf{u}_{k-1}) R(\tilde{\mathbf{u}}_{k-1},\mathbf{u}_{k-1},\xi_{k}) )  \|^2\\
				\le&2\|\mathbb{E}(  \mathbf{F}^{-1}_\mathbf{u}(\mathbf{u}_{k-1})\nabla \mathbf{F}_p(\mathbf{s}_{k-1})(\mathbf{u}_{k-1}-\tilde{\mathbf{u}}_{k-1}))\|^2 +2\|\mathbb{E}(   \mathbf{F}^{-1}_\mathbf{u}(\mathbf{u}_{k-1}) R(\tilde{\mathbf{u}}_{k-1},\mathbf{u}_{k-1},\xi_{k}) )  \|^2)\\
				\le &2\|\mathbf{F}^{-1}_\mathbf{u}(\mathbf{u}_{k-1})\|^2\|\nabla \mathbf{F}_p(\cdot)\|^2\|\mathbb{E}( (\mathbf{u}_{k-1}-\tilde{\mathbf{u}}_{k-1}))\|^2  +2\|\mathbf{F}^{-1}_\mathbf{u}(\mathbf{u}_{k-1})\|^2\| \mathbb{E}(R(\tilde{\mathbf{u}}_{k-1},\mathbf{u}_{k-1},\xi_{k}))\|^2
			\end{aligned}\label{Eq:FP}
		\end{equation}
		Since $\mathbf{F}^{-1}_\mathbf{u}$ and {$\nabla\mathbf{F}_p$} are bounded, we have
		\begin{equation}
			\|\mathbb{E}( \mathbf{FP}(\mathbf{u}_{k-1},\tilde{\mathbf{u}}_{k-1}))  \|^2
			\le 2M_\mathbf{u}^2 L_p^2\|\mathbb{E}( (\mathbf{u}_{k-1}-\tilde{\mathbf{u}}_{k-1}))\|^2  +2M_\mathbf{u}^2\| \mathbb{E}(R(\tilde{\mathbf{u}}_{k-1},\mathbf{u}_{k-1},\xi_{k}))\|^2
		\end{equation}
		Next we estimate $R(\tilde{\mathbf{u}}_{k-1},\mathbf{u}_{k-1},\xi_{k}) $:
	
			\begin{equation}
				\small
				\begin{aligned} & \|\mathbb{E}(R(\tilde{\mathbf{u}}_{k-1},\mathbf{u}_{k-1},\xi_{k}))\|^2=\|\mathbb{E}_{\xi_0\xi_1\dots\xi_{k-1}}(\mathbb{E}_{\xi_{k}}R(\tilde{\mathbf{u}}_{k-1},\mathbf{u}_{k-1},\xi_{k}))\|^2\\
					=&\Big\|\mathbb{E}\big(\frac{1}{(C_n^m)^2}\sum_{\mathcal{I},\mathcal{J}\in\mathcal{M}}R(\tilde{\mathbf{u}}_{k-1},\mathbf{u}_{k-1},\xi_k=(\mathcal{I},\mathcal{J}))\big)\Big\|^2\\
					\le&3\Big(\underbrace{\big\|\frac{1}{(C_n^m)^2}\sum_{\mathcal{I},\mathcal{J}\in\mathcal{M}}\mathbb{E}(C(\tilde{\mathbf{u}}_{k-1},\xi_k=(\mathcal{I},\mathcal{J})) )\big\|^2}_{A_1}\\
					+& \underbrace{\big\|\frac{1}{(C_n^m)^2}\sum_{\mathcal{I},\mathcal{J}\in\mathcal{M}}\mathbb{E}(S(\tilde{\mathbf{u}}_{k-1},\xi_k=(\mathcal{I},\mathcal{J}))\mathbf{F}^{-1}_\mathbf{u}(\tilde{\mathbf{u}}_{k-1},\xi_k=(\mathcal{I},\mathcal{J}))(\mathbf{F}_p(\tilde{\mathbf{u}}_{k-1}) - C(\tilde{\mathbf{u}}_{k-1},\xi_k=(\mathcal{I},\mathcal{J}))))\big\|^2}_{A_2}\\ +&\underbrace{\big\|\frac{1}{(C_n^m)^2}\sum_{\mathcal{I},\mathcal{J}\in\mathcal{M}}\mathbb{E}(\nabla\mathbf{F}_\mathbf{u}(\mathbf{t}_{k-1})\cdot(\mathbf{u}_{k-1}-\tilde{\mathbf{u}}_{k-1})\mathbf{F}^{-1}_\mathbf{u}(\tilde{\mathbf{u}}_{k-1},\xi_k=(\mathcal{I},\mathcal{J}))(\mathbf{F}_p(\tilde{\mathbf{u}}_{k-1}) }
					\\&\underbrace{- C(\tilde{\mathbf{u}}_{k-1},\xi_k=(\mathcal{I},\mathcal{J}))))\big\|^2}_{A_3}\Big).\\
				\end{aligned}\label{Eq:R}
			\end{equation}

			Since
			\begin{equation*}
				\sum_{\mathcal{I},\mathcal{J}\in\mathcal{M}}C(\tilde{\mathbf{u}}_{k-1},\xi_k=(\mathcal{I},\mathcal{J}))= \sum_{\mathcal{I},\mathcal{J}\in\mathcal{M}}\sum_{i\in \mathcal{I}}\frac{\partial F_i}{\partial p}\mathbf{e}_i=C_n^mC_{n-1}^{m-1}\mathbf{F}_p(\tilde{\mathbf{u}}_{k-1}),
			\end{equation*}
			we have
			\begin{equation*}
				A_1=\|\frac{C_n^mC_{n-1}^{m-1}}{(C_n^m)^2}\mathbb{E} (\mathbf{F}_p(\tilde{\mathbf{u}}_{k-1}))\|^2 = \|\frac{C_{n-1}^{m-1}}{C_n^m}\mathbb{E} (\mathbf{F}_p(\tilde{\mathbf{u}}_{k-1}))\|^2\leq \frac{m^2}{n^2}M_p^2.
			\end{equation*}
			
			Moreover, we have
			\begin{equation*}\small
				\begin{aligned}
					A_2&\leq \|\frac{1}{(C_n^m)^2}\sum_{\mathcal{I},\mathcal{J}\in\mathcal{M}}\mathbb{E}(S(\tilde{\mathbf{u}}_{k-1},\xi_k=(\mathcal{I},\mathcal{J})) \|^2 \|\mathbf{F}^{-1}_\mathbf{u}(\tilde{\mathbf{u}}_{k-1},\cdot)\|^2\|\mathbf{F}_p(\tilde{\mathbf{u}}_{k-1}) \|^2\\
					&\leq \frac{M_\mathbf{u}^2 M_p^2}{(C_n^m)^4}\big\|\sum_{\mathcal{I},\mathcal{J}\in\mathcal{M}}\mathbb{E}(S(\tilde{\mathbf{u}}_{k-1},\xi_k=(\mathcal{I},\mathcal{J})) \big\|^2,
				\end{aligned}
			\end{equation*}
			By the definition of $S(\tilde{\mathbf{u}}_{k-1},\xi_k=(\mathcal{I},\mathcal{J})$, we have
			\begin{equation*}
				\begin{aligned}
					\sum_{\mathcal{I},\mathcal{J}\in\mathcal{M}}\mathbb{E}(S(\tilde{\mathbf{u}}_{k-1},\xi_k=(\mathcal{I},\mathcal{J}))   &=\sum_{\mathcal{J}\in\mathcal{M}} \mathbb{E}\big(\sum_{\mathcal{I}\in\mathcal{M}}(\sum_{i\in \mathcal{I}}\mathbf{e}_i\frac{\partial F_i}{\partial\mathbf{u}}(\mathbf{u}) - \sum_{i\in \mathcal{I},j\in \mathcal{J}}E_{ij})\big)\\
					&=C_n^mC_{n-1}^{m-1}\mathbb{E}(\mathbf{F}_\mathbf{u}(\tilde{\mathbf{u}}_{k-1}))-C_{n-1}^{m-1}C_{n-1}^{m-1}\mathbf{E},		
				\end{aligned}
			\end{equation*}
			where $\mathbf{E}$ is the all-ones matrix. Therefore
			\begin{equation*}
				A_2\leq \frac{m^2}{n^2}(L_\mathbf{u}+1)^2M_\mathbf{u}^2 M_p^2.
			\end{equation*}
			Similarly, we have
			\begin{equation*}
				\begin{aligned}
					&A_3\leq \big\| \frac{1}{(C_n^m)^2}	\sum_{\mathcal{I},\mathcal{J}\in\mathcal{M}} \nabla\mathbf{F}_\mathbf{u}(\mathbf{t}_{k-1})\|^2\|\mathbb{E}(\mathbf{u}_{k-1}-\tilde{\mathbf{u}}_{k-1})\|^2\|\mathbf{F}^{-1}_\mathbf{u}(\tilde{\mathbf{u}}_{k-1},\cdot)\|^2\|\mathbf{F}_p(\tilde{\mathbf{u}}_{k-1})\big\|^2\\
					&\leq L_\mathbf{u}^2 M_\mathbf{u}^2 M_p^2\|\mathbb{E}(\mathbf{u}_{k-1}-\tilde{\mathbf{u}}_{k-1})\|^2.
				\end{aligned}
			\end{equation*}

			Then Eq. (\ref{Eq:R}) becomes
			\begin{equation}
				\|\mathbb{E}(R(\tilde{\mathbf{u}}_{k-1},\mathbf{u}_{k-1},\xi_{k}))\|^2\leq \frac{m^2}{n^2}C_1 +C_2 \|\mathbb{E}(\mathbf{u}_{k-1}-\tilde{\mathbf{u}}_{k-1})\|^2,
			\end{equation}
			where $C_1=3(M_\mathbf{u}^2 M_p^2(L_\mathbf{u}+1)^2+M_p^2)$ and $C_2=3L_\mathbf{u}^2 M_\mathbf{u}^2 M_p^2$.
			
			Then we get the estimate  below
			\begin{equation*}
				\|\mathbb{E}( \mathbf{FP}(\mathbf{u}_{k-1},\tilde{\mathbf{u}}_{k-1}))  \|^2\le  \frac{m^2}{n^2}M_1 + M_2\|\mathbb{E}(\mathbf{u}_{k-1}-\tilde{\mathbf{u}}_{k-1})\|^2,
			\end{equation*}
			where
			\begin{equation*}
				M_1  = 2M_\mathbf{u}^2C_1 \hbox{~and~}		M_2  = 2M_\mathbf{u}^2L_p^2+C_2.
			\end{equation*}
			
			Plugging the above results into \eqref{eqn:Euler}, we have
			\begin{equation}
				\begin{aligned}
					&\|\mathbb{E}(\mathbf{u}_{k}-\tilde{\mathbf{u}}_{k})\|^2\\
					\le& ( 1 + \Delta p)\|\mathbb{E}(\mathbf{u}_{k-1}-\tilde{\mathbf{u}}_{k-1})\|^2+(\frac{m^2M_1}{n^2} + M_2\|\mathbb{E}(\mathbf{u}_{k-1}-\tilde{\mathbf{u}}_{k-1})\|^2))(\Delta p + \Delta p ^2)\\
					\le & \underbrace{( 1 + \Delta p + M_2(\Delta p + \Delta p ^2))}_{\tilde{M}_1}\|\mathbb{E}(\mathbf{u}_{k-1}-\tilde{\mathbf{u}}_{k-1})\|^2+\underbrace{\frac{m^2}{n^2}M_1(\Delta p + \Delta p ^2)}_{\tilde{M}_2}.
				\end{aligned}
			\end{equation}	
			which implies
			\begin{equation}
				\begin{aligned}
					\|\mathbb{E}(\mathbf{u}_{k}-\tilde{\mathbf{u}}_{k})\|^2\le & \tilde{M}_1\|\mathbb{E}(\mathbf{u}_{k-1}-\tilde{\mathbf{u}}_{k-1})\|^2 + \tilde{M}_2\\
					\le &  \tilde{M}_1^2 \|\mathbb{E}(\mathbf{u}_{k-2} -\tilde{\mathbf{u}}_{k-2})\|^2 +\tilde{M}_1\tilde{M}_2+\tilde{M}_2\\
					\le  & \tilde{M}_1^k\|\mathbb{E}(\mathbf{u}_{0} -\tilde{\mathbf{u}}_{0})\|^2+(1 +\tilde{M}_1 + \cdots + \tilde{M}_1^{k-1} )\tilde{M}_2\\
					= &\tilde{M}_1^k\|\mathbb{E}(\mathbf{u}_{0} -\tilde{\mathbf{u}}_{0})\|^2+\frac{1 - \tilde{M}_1^k }{1-  \tilde{M}_1}\tilde{M}_2.\\
				\end{aligned}\label{eq:estimate}
			\end{equation}
			
			Then we obtain the estimate of $\frac{1 - \tilde{M}_1^k }{1-  \tilde{M}_1}\tilde{M}_2$ as follows
			\begin{equation*}
				\begin{aligned}
					&\frac{1 - \tilde{M}_1^k }{1-  \tilde{M}_1}\tilde{M}_2 \le\frac{e^{(1+M_2)(b-a)}-1 }{(1+M_2)\Delta p + M_2\Delta p ^2} \tilde{M}_2\\
					\le&\frac{e^{(1+M_2)(b-a)}-1 }{(1+M_2)\Delta p} (1 - \frac{M_2}{1+M_2}\Delta p+ \mathcal{O}(\Delta p^2))\frac{m^2M_1}{n^2}(\Delta p + \Delta p ^2)\\
					\le&\frac{m^2M_1}{n^2}\frac{e^{(1+M_2)(b-a)}-1}{1+M_2} + \mathcal{O}(\frac{m^2\Delta p}{n^2})
				\end{aligned}
			\end{equation*}
			
			Thus, Eq. (\ref{eq:estimate}) becomes
			\begin{equation*}
				\|\mathbb{E}(\mathbf{u}_{N}-\tilde{\mathbf{u}}_{N})\|^2\le \underbrace{e^{(1+M_2)(b-a)}}_{CS_1}\|\mathbb{E}(\mathbf{u}_{0}-\tilde{\mathbf{u}}_{0})\|^2+\underbrace{M_1\frac{e^{(1+M_2)(b-a)}-1}{1+M_2}}_{CS_2}\frac{m^2}{n^2} + \mathcal{O}(\frac{m^2\Delta p}{n^2}).
			\end{equation*}
	\end{proof}
	\begin{remark}
		For large-scale nonlinear parametric problems, when $n$ is large, the error caused by the stochastic homotopy tracking becomes very small due to the $O(\frac{1}{n^2})$ estimate for any given $m$. Therefore, the Euler's prediction of the stochastic homotopy tracking stays closed to the prediction by the traditional homotopy tracking.
	\end{remark}

	\begin{theorem}[Newton's correction]
		Suppose $\mathbf{u}_{k}^i $ and $\tilde{\mathbf{u}}_{k}^i $ are $i$-th Newton's iterations for solving $\mathbf{F}(\mathbf{u},p_k) = 0$ and  $\tilde{\mathbf{F}}(\mathbf{u},p_k) = 0$ respectively.
		If we have the following assumptions
		\begin{itemize}
			\item  $\mathbf{F}_\mathbf{u}$ and $\tilde{\mathbf{F}}_\mathbf{u}$ are invertible and differentiable and \[ \|\mathbf{F}_\mathbf{u}^{-1}\|\leq M_\mathbf{u}\hbox{~and~}\|\tilde{\mathbf{F}}_\mathbf{u}^{-1}\|\leq M_\mathbf{u};\]
			
			\item $\nabla\mathbf{F}_\mathbf{u}$, $\nabla\tilde{\mathbf{F}}_\mathbf{u}$ are continuous and \[ \|\nabla\mathbf{F}_\mathbf{u}\|\leq K_\mathbf{u}\hbox{~and~}\|\nabla\tilde{\mathbf{F}}_\mathbf{u}\|\leq K_\mathbf{u};\]
			\item  The initial guesses $\mathbf{u}_{k}^0 $ and $\tilde{\mathbf{u}}_{k}^0 $ are in a small neighborhood of the real solutions $\mathbf{u}_{k}$ and $\tilde{\mathbf{u}}_{k} $,
		\end{itemize}
		then we have
		\begin{equation}
			\lim_{i\to\infty}  \| \mathbb{E}(\mathbf{u}^{i}_{k} -\tilde{\mathbf{u}}^{i}_{k} )\|\le\|\mathbb{E}(\mathbf{u}_{k} -\tilde{\mathbf{u}}_{k} )\|.
		\end{equation}
	\end{theorem}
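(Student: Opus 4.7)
The plan is to reduce the claim to the classical quadratic convergence of Newton's method, applied once to $\mathbf{F}(\cdot,p_k)=\mathbf{0}$ and once per realization to $\tilde{\mathbf{F}}(\cdot,p_k,\xi)=\mathbf{0}$, and then to interchange the $i\to\infty$ limit with the expectation over $\xi$. The hypotheses of the theorem (uniform bounds on the Jacobian inverses and on the Hessian tensors, together with an initial guess in a small neighborhood of the true solution) are exactly what standard Newton convergence theory demands.

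First I would handle the deterministic iteration. Combining the update rule $\mathbf{u}_k^{i+1}=\mathbf{u}_k^i-\mathbf{F}_\mathbf{u}^{-1}(\mathbf{u}_k^i)\mathbf{F}(\mathbf{u}_k^i)$ with a second-order Taylor expansion of $\mathbf{F}(\mathbf{u}_k)$ around $\mathbf{u}_k^i$, and using $\mathbf{F}(\mathbf{u}_k)=\mathbf{0}$, one finds some $\eta_k^i$ on the segment joining $\mathbf{u}_k^i$ and $\mathbf{u}_k$ such that
\begin{equation*}
\mathbf{u}_k^{i+1}-\mathbf{u}_k=\tfrac{1}{2}\,\mathbf{F}_\mathbf{u}^{-1}(\mathbf{u}_k^i)\,\nabla\mathbf{F}_\mathbf{u}(\eta_k^i)(\mathbf{u}_k-\mathbf{u}_k^i,\mathbf{u}_k-\mathbf{u}_k^i).
\end{equation*}
The bounds $\|\mathbf{F}_\mathbf{u}^{-1}\|\le M_\mathbf{u}$ and $\|\nabla\mathbf{F}_\mathbf{u}\|\le K_\mathbf{u}$ then give the quadratic estimate $\|\mathbf{u}_k^{i+1}-\mathbf{u}_k\|\le \tfrac{1}{2}M_\mathbf{u}K_\mathbf{u}\|\mathbf{u}_k^i-\mathbf{u}_k\|^2$, so that $\mathbf{u}_k^i\to\mathbf{u}_k$ provided $\mathbf{u}_k^0$ sits in the basin $\|\mathbf{u}_k^0-\mathbf{u}_k\|<2/(M_\mathbf{u}K_\mathbf{u})$. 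The same derivation carries over verbatim, pointwise in $\xi$, to the stochastic iteration, because the hypotheses deliver $\|\tilde{\mathbf{F}}_\mathbf{u}^{-1}(\cdot,\xi)\|\le M_\mathbf{u}$ and $\|\nabla\tilde{\mathbf{F}}_\mathbf{u}(\cdot,\xi)\|\le K_\mathbf{u}$ \emph{uniformly} in $\xi$. Hence $\tilde{\mathbf{u}}_k^i(\xi)\to\tilde{\mathbf{u}}_k(\xi)$ quadratically, with a realization-independent rate.

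To conclude, I would decompose
\begin{equation*}
\mathbf{u}_k^i-\tilde{\mathbf{u}}_k^i=(\mathbf{u}_k-\tilde{\mathbf{u}}_k)+(\mathbf{u}_k^i-\mathbf{u}_k)-(\tilde{\mathbf{u}}_k^i-\tilde{\mathbf{u}}_k),
\end{equation*}
take expectations over $\xi$, and apply the triangle inequality to obtain
\begin{equation*}
\|\mathbb{E}(\mathbf{u}_k^i-\tilde{\mathbf{u}}_k^i)\|\le \|\mathbb{E}(\mathbf{u}_k-\tilde{\mathbf{u}}_k)\|+\|\mathbf{u}_k^i-\mathbf{u}_k\|+\mathbb{E}\|\tilde{\mathbf{u}}_k^i-\tilde{\mathbf{u}}_k\|.
\end{equation*}
The last two terms vanish as $i\to\infty$ by the quadratic estimates above, yielding the stated bound. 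The interchange of limit and expectation is automatic here, since $\xi$ has finite support (uniform on $(C_n^m)^2$ outcomes) and the expectation is merely a finite sum of pointwise convergent sequences.

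The main obstacle is bookkeeping rather than conceptual: one must verify that the neighborhood condition on the initial guesses $\mathbf{u}_k^0$ and $\tilde{\mathbf{u}}_k^0$ is strong enough to land in the Newton basin for \emph{every} realization of $\xi$ simultaneously, and that the tensor notation for the Taylor remainder $\nabla\mathbf{F}_\mathbf{u}(\eta)(\cdot,\cdot)$ is applied consistently with the operator-norm convention $\|\nabla\mathbf{F}_\mathbf{u}\|=\max_i\|\nabla^2\mathbf{F}_i\|$ set up earlier in the section. Both are routine provided the perturbations $S(\cdot,\xi)$ and $C(\cdot,\xi)$ from \eqref{Sys:m_general} do not destroy the uniform smoothness required, which is exactly what the uniform bounds in the hypotheses guarantee.
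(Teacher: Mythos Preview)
Your proposal is correct and follows essentially the same route as the paper: both arguments use the second-order Taylor expansion of $\mathbf{F}$ (resp.\ $\tilde{\mathbf{F}}$) about the current iterate to write $\mathbf{u}_k^{i+1}=\mathbf{u}_k+\tfrac12\mathbf{F}_\mathbf{u}^{-1}(\mathbf{u}_k^i)\nabla\mathbf{F}_\mathbf{u}(\cdot)(\mathbf{u}_k-\mathbf{u}_k^i,\mathbf{u}_k-\mathbf{u}_k^i)$, then decompose $\mathbf{u}_k^{i+1}-\tilde{\mathbf{u}}_k^{i+1}$ into $(\mathbf{u}_k-\tilde{\mathbf{u}}_k)$ plus two quadratic remainders that vanish under the uniform bounds $M_\mathbf{u},K_\mathbf{u}$ and the local initial-guess assumption. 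Your explicit remark on why the limit and expectation commute (finite support of $\xi$) is a useful addition that the paper leaves implicit.
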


	\begin{proof}
		We consider the $i$-th iteration of Newton's correction for $\mathbf{F}(\mathbf{u},p_k) = 0$ and $\tilde{\mathbf{F}}(\mathbf{u},p_k) = 0$. There exists $\mathbf{t}_{k}$ and $\tilde{\mathbf{t}}_{k}$  such that
		the following Taylor expansions hold
		\begin{equation*}
			\begin{aligned}
				0 &= \mathbf{F}(\mathbf{u}_{k} ,p_k) = \mathbf{F}(\mathbf{u}^i_{k}  )  + \mathbf{F}_\mathbf{u}(\mathbf{u}^i_{k} ) (\mathbf{u}_{k}  - \mathbf{u}^i_{k}  ) + \frac{1}{2}(\mathbf{u}_{k}  - \mathbf{u}^i_{k}  ) ^T \nabla\mathbf{F}_\mathbf{u}(\mathbf{t}_{k} )(\mathbf{u}_{k}  - \mathbf{u}^i_{k} ), \\
				0 &= \tilde{\mathbf{F}}(\tilde{\mathbf{u}}_{k},p_k) = \tilde{\mathbf{F}}(\tilde{\mathbf{u}}^i_{k} )  + \tilde{\mathbf{F}}_\mathbf{u}(\tilde{\mathbf{u}}^i_{k}) (\tilde{\mathbf{u}}_{k} - \tilde{\mathbf{u}}^i_{k}) + \frac{1}{2}(\tilde{\mathbf{u}}_{k} -\tilde{\mathbf{u}}^i_{k} ) ^T \nabla\tilde{\mathbf{F}}_\mathbf{u}(\tilde{\mathbf{t}}_{k} )(\tilde{\mathbf{u}}_{k}-\tilde{\mathbf{u}}^i_{k}).
			\end{aligned}
		\end{equation*}
		Thus the Newton's schemes are re-written as
		\begin{equation*}
			\begin{aligned}
				\mathbf{u}^{i+1}_{k} &= 	\mathbf{u}^{i}_{k} 	-\mathbf{F}^{-1}_\mathbf{u}(\mathbf{u}^{i}_{k} ) \mathbf{F}(\mathbf{u}^{i}_{k}  ) = \mathbf{u}_{k} + \frac{1}{2}\mathbf{F}^{-1}_\mathbf{u}(\mathbf{u}^{i}_{k} ) (\mathbf{u}_{k}  - \mathbf{u}^{i}_{k}  ) ^T \nabla\mathbf{F}_\mathbf{u}(\mathbf{t}_{k} )(\mathbf{u}_{k}  - \mathbf{u}^{i}_{k} ), \\
				\tilde{\mathbf{u}}^{i+1}_{k} &=	\tilde{\mathbf{u}}^i_{k} - \tilde{\mathbf{F}}^{-1}_\mathbf{u}(\tilde{\mathbf{u}}^i_{k})\tilde{\mathbf{F}}(\tilde{\mathbf{u}}^i_{k} )   =  \tilde{\mathbf{u}}_{k}  + \frac{1}{2}\tilde{\mathbf{F}}^{-1}_\mathbf{u}(\tilde{\mathbf{u}}^i_{k})(\tilde{\mathbf{u}}_{k} -\tilde{\mathbf{u}}^i_{k} ) ^T \nabla\tilde{\mathbf{F}}_\mathbf{u}(\tilde{\mathbf{t}}_{k} )(\tilde{\mathbf{u}}_{k}-\tilde{\mathbf{u}}^i_{k}).
			\end{aligned}
		\end{equation*}
		
		Therefore,
		\begin{equation}
			\label{eqn:newton}
			\begin{aligned}
				\|\mathbb{E}( \mathbf{u}^{i+1}_{k} -\tilde{\mathbf{u}}^{i+1}_{k} )\| =&\Big\|\mathbb{E}\Big( \big((\mathbf{u}^{i}_{k} -\mathbf{F}^{-1}_\mathbf{u}(\mathbf{u}^{i}_{k})\mathbf{F}(\mathbf{u}^{i}_{k}) \big) -\big(\tilde{\mathbf{u}}^{i}_{k} -\tilde{\mathbf{F}}^{-1}_\mathbf{u}(\tilde{\mathbf{u}}^{n}_{k})\tilde{\mathbf{F}}(\tilde{\mathbf{u}}^{n}_{k}))\big)\Big)\Big\|\\
				=&\| \mathbb{E}(\mathbf{u}_{k} -\tilde{\mathbf{u}}_{k} ) + \mathbb{E}(\frac{1}{2}\mathbf{F}^{-1}_\mathbf{u}(\mathbf{u}^{i}_{k} ) (\mathbf{u}_{k}  - \mathbf{u}^{i}_{k}  ) ^T \nabla\mathbf{F}_\mathbf{u}(\mathbf{t}_{k} )(\mathbf{u}_{k}  - \mathbf{u}^{i}_{k} ))\\
				&-\mathbb{E}(\frac{1}{2}\tilde{\mathbf{F}}^{-1}_\mathbf{u}(\tilde{\mathbf{u}}^{i}_{k})(\tilde{\mathbf{u}}_{k} -\tilde{\mathbf{u}}^{i}_{k} ) ^T \nabla\tilde{\mathbf{F}}_\mathbf{u}(\tilde{\mathbf{t}}_{k})(\tilde{\mathbf{u}}_{k}-\tilde{\mathbf{u}}^{i}_{k})  )\|\\
				\le &\|\mathbb{E}(\mathbf{u}_{k} -\tilde{\mathbf{u}}_{k}) \|+ \frac{1}{2}\|\mathbf{F}^{-1}_\mathbf{u}(\mathbf{u}^{i}_{k} )\|\|\nabla\mathbf{F}_\mathbf{u}(\mathbf{t}_{k})\|\mathbb{E}(\|\mathbf{u}_{k}  - \mathbf{u}^{i}_{k} \|^2)\\
				&+\frac{1}{2}\|\tilde{\mathbf{F}}^{-1}_\mathbf{u}(\tilde{\mathbf{u}}^{i}_{k}) \| \|\nabla\tilde{\mathbf{F}}_\mathbf{u}(\tilde{\mathbf{t}}_{k} )\|\mathbb{E}(\|\tilde{\mathbf{u}}_{k}-\tilde{\mathbf{u}}^{i}_{k} \|^2)\\
				\le &\|\mathbb{E}(\mathbf{u}_{k} -\tilde{\mathbf{u}}_{k} )\|  + M_\mathbf{u}{K_\mathbf{u}}\big(\mathbb{E}(\|\mathbf{u}_{k}  - \mathbf{u}^{i}_{k} \|^2) + \mathbb{E}(\|\tilde{\mathbf{u}}_{k}-\tilde{\mathbf{u}}^{i}_{k} \|^2)\big).
			\end{aligned}
		\end{equation}
		
		Due to the local assumption of the initial guesses, then we have the quadratic convergence of Newton's method, namely,
		\begin{equation}
			\begin{aligned}
				\mathbb{E}(\|\mathbf{u}_{k}  - \mathbf{u}^{i}_{k} \| ) &\le \alpha \mathbb{E}(\|\mathbf{u}_{k}  - \mathbf{u}^{i-1}_{k} \| ^2),\\
				\mathbb{E}(\|\tilde{\mathbf{u}}_{k}-\tilde{\mathbf{u}}^{i}_{k} \|) &\le \tilde{\alpha} \mathbb{E}(\|\tilde{\mathbf{u}}_{k}-\tilde{\mathbf{u}}^{i-1}_{k} \|^2).
			\end{aligned}
		\end{equation}

		Therefore  \begin{equation*}
			\begin{aligned}
				\| \mathbb{E}(\mathbf{u}^{i+1}_{k} -\tilde{\mathbf{u}}^{i+1}_{k} )\|\le& \|\mathbb{E}(\mathbf{u}_{k} -\tilde{\mathbf{u}}_{k} )\|  +  M_\mathbf{u}{K_\mathbf{u}}\big(\alpha\mathbb{E}(\|\mathbf{u}_{k}  - \mathbf{u}^{i-1}_{k} \|^4) + \tilde{\alpha}\mathbb{E}(\|\tilde{\mathbf{u}}_{k}-\tilde{\mathbf{u}}^{i}_{k} \|^4)\big)\\
				\le& \|\mathbb{E}(\mathbf{u}_{k} -\tilde{\mathbf{u}}_{k} )\|  +  M_\mathbf{u}{K_\mathbf{u}}\big(\alpha^i\mathbb{E}(\|\mathbf{u}_{k}  - \mathbf{u}^{0}_{k} \|^{2^{i+1}}) + \tilde{\alpha}^n\mathbb{E}(\|\tilde{\mathbf{u}}_{k}-\tilde{\mathbf{u}}^{i}_{k} \|^{2^{i+1}})\big).  \end{aligned}
		\end{equation*}
		By taking the limit on both sides, we have
		\begin{equation*}
			\lim_{i\to\infty}  \| \mathbb{E}(\mathbf{u}^{i}_{k} -\tilde{\mathbf{u}}^{i}_{k} )\|\le\|\mathbb{E}(\mathbf{u}_{k} -\tilde{\mathbf{u}}_{k} )\|.
		\end{equation*}
		
	\end{proof}
	\begin{remark}
		The difference of Newton's corrections between the traditional and the stochastic homotopy tracking is bounded by the difference of the solutions between the original and the stochastic systems which is pretty small for large scale systems. Thus Newton's corrections by two different homotopy tracking algorithms are near each other.
	\end{remark}

	\section{Numerical Examples}
	In this section, we compare the stochastic homotopy tracking with the traditional homotopy tracking on the Matlab platform. We use the stopping criteria of $\Delta p<10^{-7}$ for the traditional homotopy tracking method to detect the bifurcation points.
	\subsection{Example 1}
	We first consider a homotopy setup for solving a system of polynomial equations with the total degree start system, namely,
	\begin{equation}
		\begin{aligned}
			H(x,y,z;t) &=  t\begin{bmatrix}
				x^2 + y^2 + z^2 - 1\\
				x^2 - y^2 - z^2\\
				x + y + z
			\end{bmatrix} + (1-t)\begin{bmatrix}
				x^2 - 1\\
				y^2 - 1\\
				z-1
			\end{bmatrix}
		\end{aligned}=0.
		\label{ex1}
	\end{equation}
	When $t=0$, the solutions of $H(x,y,z;0)=0$ are known explicitly. The solutions of the target system, $H(x,y,z;1)=0$, are revealed by tracking $t$ from $0$ to $1$ on the complex field. There are four  solution paths needed to track from $0$ to $1$ for $\mathbf{u} = [x,y,z]^T$ shown in Fig. \ref{fig:ex1}. The solid lines indicate the solution path of $x(t)$ for the traditional homotopy tracking, while the dashed lines represent the solution paths guided by stochastic homotopy tracking.
	\begin{figure}[!th]
		\centering
		\includegraphics[width=0.7\linewidth]{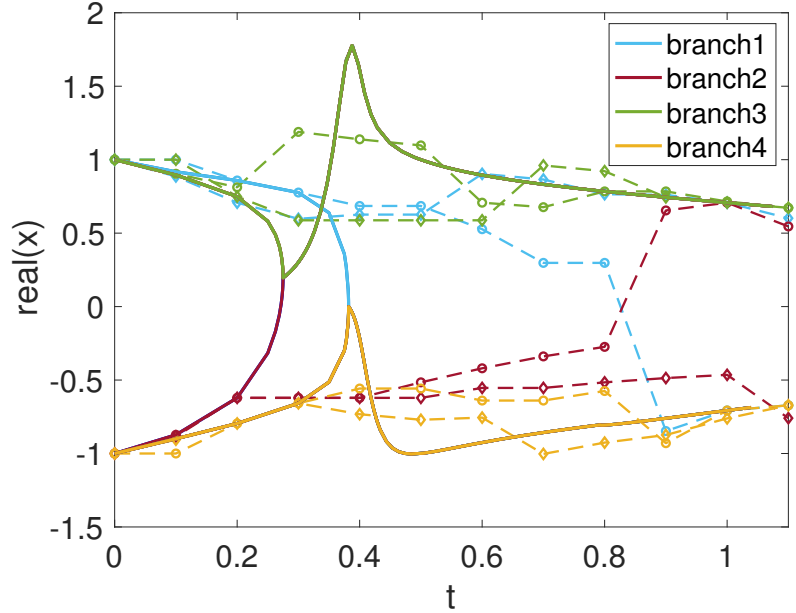}
		\caption{An illustration of the stochastic homotopy tracking method for tracking the solution path $x(t)$ of (\ref{ex1}) on four solution branches. The solid lines are for the traditional homotopy tracking while the dashed lines are for stochastic homotopy tracking.}
		\label{fig:ex1}
	\end{figure}
	The timing data is compared between two tracking methods is shown in Table \ref{tab:ex1} with $\Delta t=0.1$ which clearly demonstrates that the stochastic homotopy tracking method is more efficient with fewer steps from $t = 0$ to $t = 1$.
	\begin{table}
		\begin{center}
			\begin{tabular}[!ht]{|c|c|c|}
				\hline
				&Traditional homotopy tracking	&  Stochastic homotopy tracking \\
				\hline
				Branch 1	&  1.05s (259 steps) &  0.24s (11 steps)\\
				\hline
				Branch 2	& 0.59s (221 steps) & 0.24s (11 steps)\\
				\hline
				Branch 3	& 0.91s (246 steps) & 0.17s (11 steps)\\
				\hline
				Branch 4	& 0.84s (237 steps) & 0.18s (11 steps)\\
				\hline
			\end{tabular}
			\caption{Timing comparison between traditional and stochastic homotopy tracking methods on different branches shown in Fig. \ref{fig:ex1}.}
			\label{tab:ex1}
		\end{center}
	\end{table}

	\subsection{Example 2}
	We consider the following 1D nonlinear boundary value problem.
	\begin{equation}\left\{
		\begin{aligned}
			&u_{xx} = u^2(u^2-p),\\
			&u_x(0) = 0,	u(1) = 0,
		\end{aligned}\right.\label{ex2}
	\end{equation}
	where $p$ is the parameter. The multiple solutions become more as $p$ gets larger. Therefore, turning points happen when $p$ is tracked. We discretize (\ref{ex2}) by using the finite difference method and have the following discretized polynomial system
	\begin{equation}
		\label{eq:discretize ex2}
		\mathbf{F}(\mathbf{u},p):=
		\left(\begin{matrix}
			\frac{1}{h^2}(\mathbf{u}_{1}-2\mathbf{u}_1+\mathbf{u}_{2}) - \mathbf{u}_1^2(\mathbf{u}_1^2-p )\\
			\frac{1}{h^2}(\mathbf{u}_{i-1}-2\mathbf{u}_i+\mathbf{u}_{i+1}) - \mathbf{u}_i^2(\mathbf{u}_i^2-p )\\
			\frac{1}{h^2}(\mathbf{u}_{n-2}-2\mathbf{u}_{n-1}) - \mathbf{u}_{n-1}^2(\mathbf{u}_{n-1}^2-p )\\
		\end{matrix}\right)=0.
	\end{equation}
	where $h=\frac{1}{n}$, $\mathbf{u}\in\mathbb{R}^{n-1}$ and $\mathbf{u}_i = u(\frac{i}{n})$ for $i = 1,2,\cdots,n-1$.
	We track the parameter $p$ from $14$ down to $2$  with $\Delta p=-1$ for one solution path with a turning point shown in Fig~\ref{fig:ex2}. Since the lower solution branch is close to the constant solution branch (the red line in  Fig.~\ref{fig:ex2}, the stochastic homotopy tracking just switches to the constant solution branch when it is close to the turning point.  Moreover, the stochastic homotopy tracking is much efficient than the traditional method by comparing the average tracking time shown in Table \ref{tab:ex2} for different grid points $n$.
	For the upper solution branch, since no nearby solution branch exists, the stochastic homotopy tracking has to deal with a stochastic system with a large perturbation, namely increasing $m$ in {\bf Algorithm \ref{alg1}}.
	
	\begin{figure}[th]
		\centering
		\includegraphics[width=0.95\linewidth]{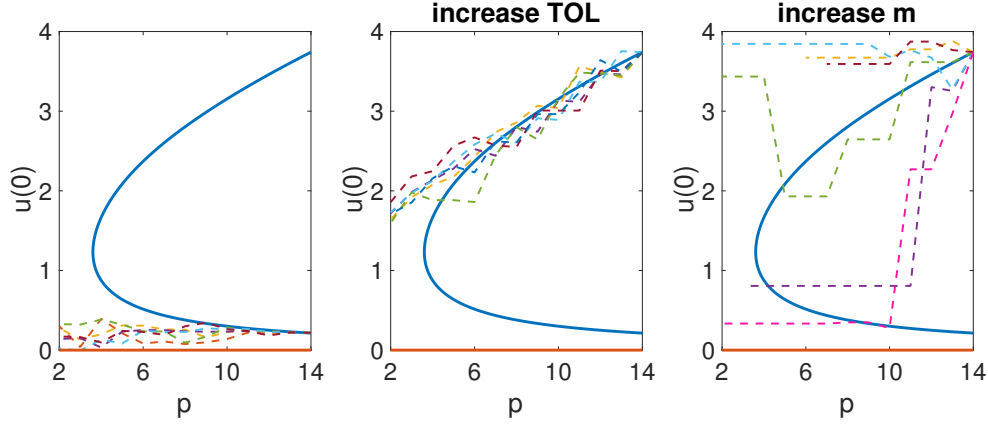}
		\caption{An illustration of stochastic homotopy tracking for tracking (\ref{ex2}) with respect to $p$ from 14 to 2. The lower solution branch is switched to the constant solution branch ({\bf Left}); The upper solution branch needs a large $TOL$ ({\bf Middle}) or a large $m$ ({\bf Right}) in {\bf Algorithm \ref{alg1}}.}
		\label{fig:ex2}
	\end{figure}

	\begin{table}
		\begin{center}
			\begin{tabular}[ht]{|c|c|c|}
				\hline
				n	&Traditional &  Stochastic \\
				\hline
				10	&  0.027s (24 steps) &  0.013s (12 steps)\\
				\hline
				20	&  0.051s (22 steps) &  0.022s (12 steps)\\
				\hline
				40	&  0.141s (30 steps) &  0.076s (12 steps)\\
				\hline
				80	&  0.530s (29 steps) &  0.272s (12 steps)\\\hline
			\end{tabular}
			\caption{Comparison between the traditional and the stochastic homotopy tracking with different number of grid points $n$.}
			\label{tab:ex2}
		\end{center}
	\end{table}

	\subsection{Example 3}
	Last we consider the Schnakenberg model which is a system of partial differential equations shown below \cite{hao2020spatial}:
	\begin{equation}\left\{
		\begin{aligned}
			&\frac{\partial u}{\partial t} = \Delta u + \eta(a-u+u^2v),\\
			&\frac{\partial v}{\partial t} = d\Delta v + \eta(b-u^2v),
		\end{aligned}\right.\label{ex3}
	\end{equation}
	where $u$ is an activator and $v$ is a substrate. The steady-state system of (\ref{ex3}) with non-flux boundary condition has been well-studied in \cite{hao2020spatial} and shown multiple steady-state solutions and the bifurcation structure to the diffusion parameter $d$. In this example, we consider the discretized steady-state system on a 1D domain $x\in[0,1]$ with no-flux boundary conditions:
	\begin{equation}
		\label{eq:discretize ex3}
		\mathbf{F}(\mathbf{u},\mathbf{v},d):=
		\left(\begin{matrix}
			\frac{1}{h^2}(2\mathbf{u}_{2}-2\mathbf{u}_1) +\eta(a-\mathbf{u}_1+\mathbf{u}_1^2\mathbf{v}_1)\\
			\frac{1}{h^2}(\mathbf{u}_{i-1}-2\mathbf{u}_i+\mathbf{u}_{i+1}) +\eta(a-\mathbf{u}_i+\mathbf{u}_i^2\mathbf{v}_i)\\
			\frac{1}{h^2}(2\mathbf{u}_{n}-2\mathbf{u}_{n+1}) +\eta(a-\mathbf{u}_{n+1}+\mathbf{u}_{n+1}^2\mathbf{v}_{n+1})\\
			\frac{d}{h^2}(2\mathbf{v}_{2}-2\mathbf{v}_1) +\eta(b-\mathbf{u}_1^2\mathbf{v}_1)\\
			\frac{d}{h^2}(\mathbf{v}_{i-1}-2\mathbf{v}_i+\mathbf{v}_{i+1}) +\eta(b-\mathbf{u}_i^2\mathbf{v}_i)\\
			\frac{d}{h^2}(2\mathbf{v}_{n}-2\mathbf{v}_{n+1}) +\eta(b-\mathbf{u}_{n+1}^2\mathbf{v}_{n+1})\\
		\end{matrix}\right)=0.
	\end{equation}
	where $h = \frac{1}{n}$, $\mathbf{u},\mathbf{v}\in\mathbb{R}^{n+1}$ with $\mathbf{u}_i = u(\frac{i-1}{n})$ and $\mathbf{v}_i = v(\frac{i-1}{n})$ for $i = 1,2,\cdots,n+1$. We introduce ghost points $\mathbf{u}_{0} ,\mathbf{v}_{0} ,\mathbf{u}_{n+2} $, and $\mathbf{v}_{n+2} $ at $x=0$ and $x=1$. The nonflux boundary conditions imply that $\mathbf{u}_{0} = \mathbf{u}_2$, $\mathbf{v}_{0} = \mathbf{v}_2$,$\mathbf{u}_{n+2} = \mathbf{u}_{n}$, and $\mathbf{v}_{n+2} = \mathbf{v}_{n}$.
	We choose $a = 1/3,\ b = 2/3,\ \eta = 50$ and track $d$ from $50$ to $35$ with different number of grid points $n$. As shown in Fig. \ref{fig:ex3},
	the traditional homotopy tracking method stops near the bifurcation around $d\approx 45$ with a very small tracking stepsize. However, the stochastic homotopy tracking method can avoid the bifurcation point and track down to $35$. Moreover, as $n$ goes larger, the solution path guided by the stochastic homotopy tracking gets closer to the original path.
	Detailed iteration comparison between two tracking methods is shown in Table \ref{tab:ex3_compare} for the different number of grid points $n$ and different tracking stepsizes $\Delta d$. It clearly shows that the stochastic homotopy tracking method becomes more efficient compared to the traditional one as the size of the system gets larger.
	
	\begin{figure}[th]
		\centering
		\includegraphics[width=0.95\linewidth]{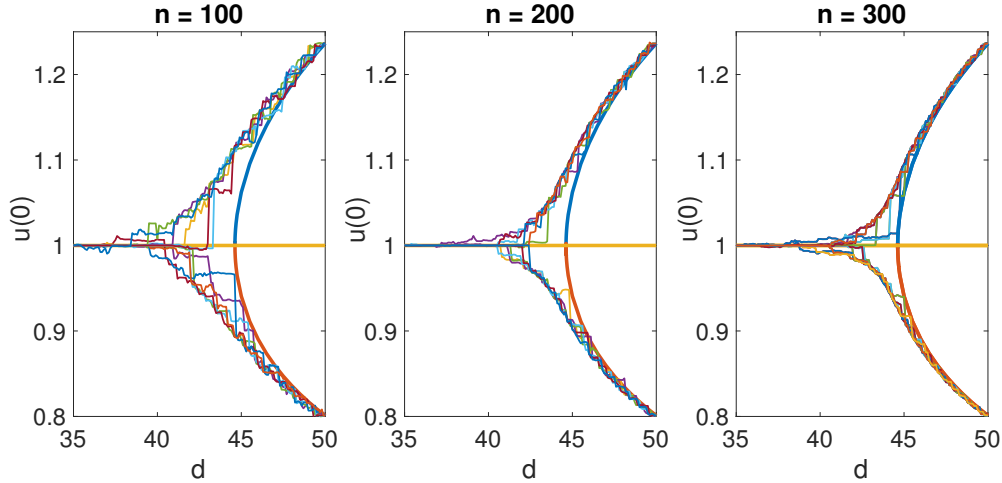}
		\caption{Traditional and stochastic homotopy tracking methods with different number of grid points.}
		\label{fig:ex3}
	\end{figure}

	\begin{table}
		\begin{center}
			\begin{tabular}{|c|c|c|c|c|c|}
				\hline
				\multicolumn{2}{|c|}{\multirow{2}{*}{}}
				&\multicolumn{2}{|c|}{Lower branch}&\multicolumn{2}{|c|}{Upper branch}\\\cline{1-6}
				n & $\Delta d$& Traditional & Stochastic & Traditional  & Stochastic \\
				\hline
				{\multirow{2}{*}{100}}& $-0.5$& 2.76s(32steps)  & 2.30s(31steps) & 2.49s(28steps)  & 1.87s(31steps)  \\\cline{2-6}
				&$-1$	& 3.23s(59steps)  & 1.35s(16steps) & 2.38s(34steps)  & 0.93s(16steps) \\\hline
				{\multirow{2}{*}{200}}& $-0.5$& 12.88s(53steps)   & 8.83s(31steps)  & 10.62s(35steps)  & 8.93s(31steps)\\\cline{2-6}
				&$ -1$	& 9.36s(53steps)   & 3.08s(16steps)  & 7.61s(21steps)  & 3.88s(16steps)\\\hline
				{\multirow{2}{*}{ 300}}& $ -0.5$& 77.9s(90steps)   & 34.1s(31steps)  &  40.2s(34steps)  &  36.9s(31steps) \\\cline{2-6}
				&$ -1$	& 40.3s(90steps)   & 16.5s(16steps)  &  30.1s(34steps)  &  15.6s(16steps)\\\hline
			\end{tabular}
		\end{center}
		\caption{Comparison between traditional and stochastic homotopy tracking with different number of grid points $n$  and  different step-sizes $\Delta d$.}
		\label{tab:ex3_compare}
	\end{table}
	
	\section{Conclusion}
	By taking the path tracking from a stochastic differential equation point of view, we have developed a stochastic homotopy path tracking algorithm that perturbs the  nonlinear parametric system by randomly removing $m$ equation each step. In this paper, we also proved that the solution path guided by the stochastic homotopy algorithm is nearby the original solution path but can avoid the singularities during the tracking. Several numerical examples are used to demonstrate the efficiency of this new method through comparison with the traditional homotopy tracking method. However, the efficiency of the stochastic homotopy tracking depends on the solution landscaping of the original system: if there exists a nearby solution path for bifurcation points, then the stochastic homotopy tracking can switch to the nearby solution paths and keep tracking. Otherwise, the computational cost might be still expensive since it keeps solving stochastic systems by increasing perturbations. In the future, we will improve the efficiency of stochastic homotopy tracking further by exploring the optimal perturbation.

	\section{Data availability}
	Data sharing not applicable to this article as no datasets were generated or analysed during the current study.
	
	\section{Declarations}
	This research is supported by NSF via DMS-1818769. The authors declare that there is no conflict of interest.

\bibliographystyle{plain}
\bibliography{stochastic_homotopy}

\begin{thebibliography}{10}

\bibitem{allgower2003introduction}
E.~Allgower and K.~Georg.
\newblock {\em Introduction to numerical continuation methods}, volume~45.
\newblock SIAM, 2003.

\bibitem{arnold1974stochastic}
L.~Arnold.
\newblock Stochastic differential equations.
\newblock {\em New York}, 1974.

\bibitem{bates2018paramotopy}
D.~Bates, D.~Brake, and M.~Niemerg.
\newblock Paramotopy: Parameter homotopies in parallel.
\newblock In {\em International Congress on Mathematical Software}, pages
  28--35. Springer, 2018.

\bibitem{bates2006bertini}
D.~Bates, J.~Hauenstein, A.~Sommese, and C.~Wampler.
\newblock Bertini: Software for numerical algebraic geometry, 2006.

\bibitem{bates2008adaptive}
D.~Bates, J.~Hauenstein, A.~Sommese, and C.~Wampler.
\newblock Adaptive multiprecision path tracking.
\newblock {\em SIAM Journal on Numerical Analysis}, 46(2):722--746, 2008.

\bibitem{bates2013numerically}
D.~Bates, J.~Hauenstein, A.~Sommese, and C.~Wampler.
\newblock {\em Numerically solving polynomial systems with Bertini}, volume~25.
\newblock SIAM, 2013.

\bibitem{bates2011parallel}
J.~Bates, D.and~Hauenstein and A.~Sommese.
\newblock A parallel endgame.
\newblock {\em Contemp. Math}, 556:25--35, 2011.

\bibitem{cauwenberghs1993fast}
G.~Cauwenberghs.
\newblock A fast stochastic error-descent algorithm for supervised learning and
  optimization.
\newblock In {\em Advances in neural information processing systems}, pages
  244--251, 1993.

\bibitem{CHao}
Q.~Chen and W.~Hao.
\newblock A homotopy training algorithm for fully connected neural networks.
\newblock {\em Submitted}.

\bibitem{hao2018equation}
W.~Hao and J.~Harlim.
\newblock An equation-by-equation method for solving the multidimensional
  moment constrained maximum entropy problem.
\newblock {\em Communications in Applied Mathematics and Computational
  Science}, 13(2):189--214, 2018.

\bibitem{HAOWENO}
W.~Hao, J.~Hauenstein, C.-W. Shu, A.~Sommese, Z.~Xu, and Y.-T. Zhang.
\newblock A homotopy method based on weno schemes for solving steady state
  problems of hyperbolic conservation laws.
\newblock {\em Journal of Computational Physics}, 250:332--346, 2013.

\bibitem{hao2020spatial}
W.~Hao and C.~Xue.
\newblock Spatial pattern formation in reaction--diffusion models: a
  computational approach.
\newblock {\em Journal of Mathematical Biology}, pages 1--23, 2020.

\bibitem{HZ}
W.~Hao and C.~Zheng.
\newblock An adaptive homotopy method for computing bifurcations of nonlinear
  equations.
\newblock {\em Submitted}.

\bibitem{leykin2011numerical}
A.~Leykin.
\newblock Numerical algebraic geometry.
\newblock {\em Journal of Software for Algebra and Geometry}, 3(1):5--10, 2011.

\bibitem{li2019coordinatewise}
Y.~Li, J.~Lu, and Z.~Wang.
\newblock Coordinatewise descent methods for leading eigenvalue problem.
\newblock {\em SIAM Journal on Scientific Computing}, 41(4):A2681--A2716, 2019.

\bibitem{nesterov2012efficiency}
Yu~Nesterov.
\newblock Efficiency of coordinate descent methods on huge-scale optimization
  problems.
\newblock {\em SIAM Journal on Optimization}, 22(2):341--362, 2012.

\bibitem{nguyen2015iq}
L.~Nguyen, H.~Schmidt, A.~Von~Haeseler, and B.~Minh.
\newblock Iq-tree: a fast and effective stochastic algorithm for estimating
  maximum-likelihood phylogenies.
\newblock {\em Molecular biology and evolution}, 32(1):268--274, 2015.

\bibitem{wampler2005numerical}
C.~Wampler and A.~Sommese.
\newblock {\em The Numerical solution of systems of polynomials arising in
  engineering and science}.
\newblock World Scientific, 2005.

\bibitem{wang2018two}
Y.~Wang, W.~Hao, and G.~Lin.
\newblock Two-level spectral methods for nonlinear elliptic equations with
  multiple solutions.
\newblock {\em SIAM Journal on Scientific Computing}, 40(4):B1180--B1205, 2018.

\bibitem{yang2018convergence}
Y.~Yang and W.~Hao.
\newblock convergence of a homotopy finite element method for computing steady
  states of burgers' equation.
\newblock {\em ESAIM: Mathematical Modelling and Numerical Analysis}, 2018.

\end{thebibliography}

\end{document}